\documentclass{amsart}[12pt]
\usepackage{amsmath, amsthm, amscd, amsfonts, amssymb, mathrsfs, eufrak, dsfont, bbm}

\makeatletter \oddsidemargin.9375in \evensidemargin \oddsidemargin
\marginparwidth1.9375in \makeatother


\newtheorem{theorem}{Theorem}[section]

\newtheorem{proposition}[theorem]{Proposition}
\newtheorem{corollary}[theorem]{Corollary}
\newtheorem{definition}[theorem]{Definition}
\newtheorem{example}[theorem]{Example}

\theoremstyle{remark}

\numberwithin{equation}{section}

\begin{document}

\noindent {\footnotesize }\\[1.00in]

\title[ On globally symmetric Finsler spaces]{On globally symmetric Finsler spaces}

\author[R. Chavosh Khatamy , R. Esmaili\\]{R. Chavosh Khatamy $^{*}$, R. Esmaili\\}

\address{Department of Mathematics,
Faculty of Sciences, Islamic Azad University, Tabriz Branch}
\email{\tt chavosh@tabrizu.ac.ir, r\_chavosh@iaut.ac.ir  }

\address{Department of Mathematics, Faculty of Sciences, Payame noor University, Ahar Branch}
\email{\tt Rogayyeesmaili@yahoo.com}

\subjclass[2000]{53C60, 53C35}

\keywords{Finsler Space, Locally symmetric Finsler space, Globally
Symmetric Finsler space,  Berwald space.
\\
\indent $^{*}$ The first author was supported by the funds of the
Islamic Azad University- Tabriz Branch, (IAUT) }

\begin{abstract}
The paper consider the symmetric of Finsler spaces. We give some
conditions about globally symmetric Finsler spaces. Then we prove
that these spaces can be written as a coset space of Lie group
with an invariant Finsler metric. Finally, we prove that such a
space must be Berwaldian.
\end{abstract}
\maketitle

\section{Introduction}
The study of Finsler spaces has important in physics and Biology
(\cite{5}), In particular there are several important books about
such spaces (see \cite{1}, \cite{8}). For example recently D.
Bao, C. Robels, Z. Shen used the Randers metric in Finsler on
Riemannian  manifolds (\cite{9} and \cite{8}, page 214). We must
also point out there was only little study about symmetry of such
spaces (\cite{3}, \cite{12}). For example E. Cartan has been
showed symmetry plays very important role in Riemannian geometry
(\cite{5} and \cite{12}, page 203).
\begin {definition}\label{def1.0}
A Finsler space is locally symmetric if, for any $p\in M$, the
geodesic reflection $s_{p}$ is a local isometry of the Finsler
metric.
\end {definition}
\begin {definition}\label{def1.1}
A reversible Finsler space $(M,F)$ is called globally Symmetric if
for any $p\in M$ the exists an involutive isometry  $\sigma_{x}$
$($that is, $\sigma_{x}^{2}=I$ but $\sigma_{x}\neq I)$ of such
that $x$ is an isolated fixed point of $\sigma_{x}$.
\end {definition}
\begin {definition}\label{def1.2}
Let $G$ be a Lie group and $K$ is a closed subgroup of $G$. Then
the coset space $G/K$ is called symmetric if there exists an
involutive automorphism $\sigma$ of $G$ such that
$$G^{0}_{\sigma}\subset K\subset G_{\sigma},$$
where $G_{\sigma}$ is the subgroup consisting of the fixed points
of $\sigma$ in $G$ and $G^{0}_{\sigma}$ denotes the identity
component of $G_{\sigma}$.
\end {definition}
\begin {theorem}\label{the1.3}
Let $G/K$ be a symmetric coset space. Then any G-invariant
reversible Finsler metric $($if exists$)$ $F$ on $G/K$ makes
$(G/K, F)$ a globally symmetric Finsler space $($\cite{8}, page
8$)$.
\end {theorem}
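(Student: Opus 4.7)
The plan is to build the required involutive isometry $\sigma_x$ at every point by first constructing one at the origin $o=eK$ from the given automorphism $\sigma$ of $G$, and then transporting it by $G$-invariance.

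\textbf{Step 1: Descent of $\sigma$ to $G/K$.} Because $K\subset G_{\sigma}$, the automorphism $\sigma$ fixes $K$ pointwise, so in particular $\sigma(K)=K$. Hence $\sigma$ descends to a well-defined involution $\tilde\sigma\colon G/K\to G/K$ given by $\tilde\sigma(gK)=\sigma(g)K$. It is smooth, $\tilde\sigma^{2}=I$, and $\tilde\sigma(o)=o$. Moreover, $\tilde\sigma$ satisfies the equivariance relation $\tilde\sigma\circ L_{g}=L_{\sigma(g)}\circ\tilde\sigma$ with respect to the action of $G$ on $G/K$ by left translations.

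\textbf{Step 2: Computing $d\tilde\sigma_{o}$.} Let $\mathfrak g$ and $\mathfrak k$ be the Lie algebras of $G$ and $K$. The differential $d\sigma_{e}$ is an involution of $\mathfrak g$, giving an eigenspace splitting $\mathfrak g=\mathfrak g_{+}\oplus\mathfrak g_{-}$ into $\pm1$-eigenspaces. Since $K\subset G_{\sigma}$ we have $\mathfrak k\subset \mathfrak g_{+}$; since $G^{0}_{\sigma}\subset K$ the reverse inclusion also holds, so $\mathfrak g_{+}=\mathfrak k$ and we can identify the tangent space $T_{o}(G/K)$ with $\mathfrak m:=\mathfrak g_{-}$. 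Under this identification, $d\tilde\sigma_{o}$ acts as $-\mathrm{id}$.

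\textbf{Step 3: Isometry property.} Since $F$ is reversible, $F_{o}(-w)=F_{o}(w)$ for every $w\in T_{o}(G/K)$, so $d\tilde\sigma_{o}$ preserves $F_{o}$. To propagate this to every point, take $y=gK$ and $v=dL_{g}(w)\in T_{y}(G/K)$. Using the equivariance from Step 1 together with the $G$-invariance of $F$, we compute
\[
F\bigl(\tilde\sigma(y),d\tilde\sigma(v)\bigr)
=F\bigl(\sigma(g)o,\,dL_{\sigma(g)}\,d\tilde\sigma_{o}(w)\bigr)
=F(o,-w)=F(o,w)=F(y,v).
\]
Hence $\tilde\sigma$ is an involutive isometry of $(G/K,F)$ fixing $o$.

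\textbf{Step 4: Isolated fixed point and translation.} Because $d\tilde\sigma_{o}=-\mathrm{id}$, its only fixed vector is $0$; by the inverse function theorem applied to $\tilde\sigma$ in a normal neighbourhood of $o$, the fixed point $o$ is isolated. For an arbitrary $x=gK$ define $\sigma_{x}:=L_{g}\circ\tilde\sigma\circ L_{g^{-1}}$. This is a composition of isometries, hence an isometry; it is involutive, fixes $x$, and inherits the isolatedness of the fixed point from $\tilde\sigma$ at $o$. Thus $(G/K,F)$ is globally symmetric.

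The only step that requires genuine care is Step 2, i.e.\ the Lie-algebraic identification of $T_{o}(G/K)$ with the $-1$-eigenspace of $d\sigma_{e}$; everything else reduces to bookkeeping with $G$-invariance and reversibility. The double inclusion $G^{0}_{\sigma}\subset K\subset G_{\sigma}$ is used precisely here, first to guarantee that $\tilde\sigma$ is well-defined and then to force $d\tilde\sigma_{o}=-\mathrm{id}$, which is what makes $o$ isolated and allows reversibility of $F$ to yield the isometry property.
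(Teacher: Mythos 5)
Your proof is correct, and it is essentially the standard argument: the paper itself gives no proof of this theorem, deferring to the cited reference of Deng and Hou, where the symmetry at the origin is likewise obtained by descending $\sigma$ to $G/K$, using $G^{0}_{\sigma}\subset K\subset G_{\sigma}$ to get $d\tilde\sigma_{o}=-I$ on $\mathfrak{m}$, invoking reversibility and $G$-invariance for the isometry property, and conjugating by left translations to produce $\sigma_{x}$ at an arbitrary point. No gaps worth flagging; your Step 2 identification of $T_{o}(G/K)$ with the $(-1)$-eigenspace and the local-diffeomorphism argument for isolatedness of the fixed point are exactly the points that need care, and you handle both.
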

\begin {theorem}\label{the1.4}
Let $(M,F)$ be a globally Symmetric Finsler space. For $p\in M$,
denote the involutive isometry of $(M,F)$ at $p$ by $S_{x}$. Then
we
have \\

~~~~~~ $($a$)$ ~~  For any $p\in M, (dS_{x})_{x}=-I$. In
particular, $F$ must
  be reversible.

~~~~~~ $($b$)$ ~~ $(M,F)$ is forward and backward complete;

~~~~~~ $($c$)$ ~~ $(M,F)$ is homogeneous. This is, the group of
isometries of $(M,F), I(M,F)$, acts transitively on $M$.

~~~~~~ $($d$)$ ~~  Let $\widetilde{M}$ be the universal covering
space of $M$
  and $\pi$ be the projection mapping. Then
  $(\widetilde{M},\pi^{*}(F))$ is a globally Symmetric Finsler
  space, where $\pi^{*}(F)$ is define by
\begin{eqnarray*}
\pi^{*}(F)(q)=F((d\pi)_{\widetilde{p}}(q)), ~~ q\in
T_{\widetilde{p}} (\widetilde{M}),
\end{eqnarray*}
(See \cite{8} to prove).
\end {theorem}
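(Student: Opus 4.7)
My plan is to handle the four parts in sequence, as each feeds into the next. For Part (a), set $A:=(dS_x)_x$. Since $S_x^2=I$, $A$ is a linear involution on $T_xM$, hence diagonalizable with eigenvalues in $\{+1,-1\}$. A nonzero $+1$-eigenvector $v$ would generate a geodesic $\gamma(t)=\exp_x(tv)$ whose initial position and velocity are fixed by $S_x$; because isometries carry geodesics to geodesics preserving initial data, $\gamma$ would be pointwise fixed near $x$, contradicting the isolation of $x$. Hence $A=-I$, and reversibility is forced by $F(v)=F(Av)=F(-v)$.

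For Part (b), I would use the standard ``reflect to extend'' trick. Given $\gamma:[0,a)\to M$ and $t_0\in(a/2,a)$, reflect the segment $\gamma|_{[0,t_0]}$ through $p=\gamma(t_0)$: Part (a) makes $\hat\gamma(t_0+s):=S_p(\gamma(t_0-s))$ a geodesic matching $\gamma$ smoothly at $p$ (the velocities agree because $(dS_p)_p=-I$) and defined on $[0,2t_0]\supsetneq[0,a)$. Iterating in both directions gives forward and backward geodesic completeness.

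For Part (c), invoke the Finslerian Hopf--Rinow theorem, now available by combining (b) with the reversibility from (a), to join any $p,q\in M$ by a minimizing geodesic $\gamma$. The midpoint reflection $S_m$, by Part (a), satisfies $S_m(\gamma(-s))=\gamma(s)$ along $\gamma$, so reparametrising with $p=\gamma(-s_0)$ and $q=\gamma(s_0)$ yields $S_m(p)=q$; transitivity of $I(M,F)$ follows.

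For Part (d), lift each $S_p$ via covering-space theory: for $\tilde p\in\pi^{-1}(p)$, the composite $S_p\circ\pi:\tilde M\to M$ lifts uniquely to $\tilde S_{\tilde p}:\tilde M\to\tilde M$ fixing $\tilde p$, and uniqueness of such lifts forces $\tilde S_{\tilde p}^{\,2}=\mathrm{id}$. Because $\pi$ is a local diffeomorphism and $\pi^*F$ is defined as a pullback, $\tilde S_{\tilde p}$ is automatically an isometry of $(\tilde M,\pi^*F)$; isolation of $\tilde p$ as a fixed point descends from that of $p$ via any evenly-covered neighborhood of $p$. The principal obstacle I anticipate is in Part (c): one must carefully invoke the Finslerian Hopf--Rinow theorem, and this requires both the reversibility from (a) and the two-sided completeness from (b), so the order of the arguments is essential. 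The remaining parts are essentially mechanical once $(dS_x)_x=-I$ is in hand.
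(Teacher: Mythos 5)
Your argument is correct and is essentially the standard proof of this result; the paper itself gives no proof of Theorem~\ref{the1.4}, deferring entirely to the reference \cite{8} (Deng--Hou), where exactly this chain of ideas appears: the $\pm1$-eigenvalue analysis of the involution $(dS_x)_x$ ruling out a $+1$-eigenvector via isolation of the fixed point, the reflect-to-extend argument for completeness, the Finslerian Hopf--Rinow theorem plus midpoint symmetries for homogeneity, and the unique-lifting argument on the universal cover. Your explicit remarks on where reversibility is needed (to reverse geodesics in (b) and to apply Hopf--Rinow in (c)) are exactly the right points of care, so nothing further is required.
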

\begin {corollary}\label{cor1.5}
Let $(M,F)$ be a globally Symmetric Finsler space. Then for any
$p\in M, s_{p}$ is a local geodesic Symmetry at $p$. The Symmetry
$s_{p}$, is unique. $($See prove of Theorem 1.2 and \cite{1}$)$
\end {corollary}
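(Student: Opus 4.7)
The plan is to deduce both assertions directly from Theorem \ref{the1.4}(a). Fix $p \in M$ and let $s_{p}$ denote the involutive isometry furnished by Definition \ref{def1.1}. Let $\gamma \colon (-\varepsilon,\varepsilon) \to M$ be any geodesic of $(M,F)$ with $\gamma(0) = p$. Since $s_{p}$ is an isometry, the composition $s_{p}\circ\gamma$ is again a geodesic of $(M,F)$; it satisfies $(s_{p}\circ\gamma)(0) = p$, and by Theorem \ref{the1.4}(a) its initial velocity equals $(ds_{p})_{p}(\gamma'(0)) = -\gamma'(0)$. Because $F$ is reversible (again Theorem \ref{the1.4}(a)), the reversed curve $t\mapsto \gamma(-t)$ is also a geodesic of $(M,F)$, sharing the same initial point and initial velocity with $s_{p}\circ\gamma$. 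Invoking the uniqueness of Finsler geodesics from prescribed initial data then gives $s_{p}(\gamma(t)) = \gamma(-t)$ on a neighbourhood of $0$, which is precisely the statement that $s_{p}$ is a local geodesic symmetry at $p$.

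For the uniqueness assertion, I would argue as follows. Suppose $\sigma$ and $\sigma'$ are two involutive isometries of $(M,F)$ each having $p$ as an isolated fixed point. Theorem \ref{the1.4}(a) applied to each of them yields $(d\sigma)_{p} = (d\sigma')_{p} = -I$. Setting $\tau := \sigma'\circ\sigma$, we obtain an isometry of $(M,F)$ that fixes $p$ and whose differential at $p$ equals $(-I)(-I) = I$. The standard rigidity principle for isometries of a connected Finsler manifold, namely that an isometry fixing a point and having identity differential at that point must coincide with the identity map, then forces $\tau = I$; since $\sigma$ is involutive, this gives $\sigma' = \sigma^{-1} = \sigma$.

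The main obstacle is the rigidity principle invoked in the uniqueness step: unlike the corresponding classical statement in Riemannian geometry, its Finslerian version requires some care. The cleanest way to justify it is to observe that, by the local diffeomorphism property of the Finsler exponential map at $p$, any isometry fixing $p$ is locally determined on a normal neighbourhood of $p$ by its action on radial geodesics, hence by its differential at $p$, and then to promote this local equality to a global one using connectedness of $M$ together with the fact that the set where two isometries agree is both open and closed. This step is standard and I would cite it from \cite{8}; the rest of the argument is then a direct transcription of the Riemannian symmetric-space proof, granted Theorem \ref{the1.4}(a) and the reversibility of $F$.
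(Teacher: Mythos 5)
Your proposal is correct and is essentially the standard argument: the paper itself gives no proof of this corollary, deferring to the proof of Theorem \ref{the1.4} and the references, and the argument found there is exactly yours --- $(ds_{p})_{p}=-I$ plus reversibility plus uniqueness of geodesics gives the local geodesic symmetry, and the rigidity of isometries (fixing a point with identity differential on a connected, complete space forces the identity) gives uniqueness. The only point worth tightening is that in the open--closed step one should take the set where the two isometries \emph{and their differentials} agree, but this is the standard formulation and does not affect the argument.
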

\section{A theorem on globally Symmetric Finsler spaces}
\begin {theorem}\label{the2.1}
Let $(M,F)$ be a globally Symmetric Finsler space. Then exits a
Riemannian Symmetric pair $(G,K)$ such that $M$ is diffeomorphic
to $G/K$ and $F$ is invariant under $G$.
\end {theorem}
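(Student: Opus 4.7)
The plan is to build the pair $(G,K)$ directly from the isometry group of $(M,F)$ and then manufacture the involutive automorphism using the symmetry at a chosen base point. First I would set $G = I(M,F)$. By Theorem \ref{the1.4}(c), $G$ acts transitively on $M$, and by the Finsler analogue of the Myers--Steenrod theorem (which must be invoked or quoted from the Deng--Hou work cited in the paper), $G$ carries the structure of a Lie group acting smoothly on $M$. Fix a base point $p_{0}\in M$ and let $K$ be the isotropy subgroup at $p_{0}$; then $K$ is closed in $G$, and the orbit map $g\mapsto g\cdot p_{0}$ induces a diffeomorphism $G/K\to M$. Invariance of $F$ under $G$ is then tautological from the definition of $G$.

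Next I would construct the involutive automorphism. Let $s_{p_{0}}$ be the symmetry at $p_{0}$ (unique by Corollary \ref{cor1.5}), and define
\begin{equation*}
\sigma : G\lo G,\qquad \sigma(g)=s_{p_{0}}\,g\,s_{p_{0}}^{-1}.
\end{equation*}
Since $s_{p_{0}}^{2}=I$, the map $\sigma$ is an involutive automorphism of $G$. I then need to establish the sandwich
\begin{equation*}
G^{0}_{\sigma}\subset K\subset G_{\sigma}.
\end{equation*}
For the right-hand inclusion, let $k\in K$. Then $\sigma(k)=s_{p_{0}}ks_{p_{0}}^{-1}$ is an isometry fixing $p_{0}$, and a direct check on tangent vectors using $(ds_{p_{0}})_{p_{0}}=-I$ from Theorem \ref{the1.4}(a) shows that $\sigma(k)$ has the same $1$-jet at $p_{0}$ as $k$; uniqueness of Finsler isometries with prescribed value and differential at a point then forces $\sigma(k)=k$, so $K\subset G_{\sigma}$.

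For the inclusion $G^{0}_{\sigma}\subset K$, I would pass to the Lie algebra. Decompose $\mathfrak{g}=\mathfrak{k}\oplus\mathfrak{m}$ as the $(+1)$- and $(-1)$-eigenspaces of $d\sigma$ at the identity, which is possible because $K\subset G_{\sigma}$ shows $\mathfrak{k}\subset\mathfrak{g}_{\sigma}$, and the natural identification of $\mathfrak{g}/\mathfrak{k}$ with $T_{p_{0}}M$ intertwines $d\sigma$ with $(ds_{p_{0}})_{p_{0}}=-I$, so any $\sigma$-fixed vector in $\mathfrak{g}$ must lie in $\mathfrak{k}$. Consequently the Lie algebra of $G_{\sigma}$ equals $\mathfrak{k}$, which is the Lie algebra of $K$, and hence $G^{0}_{\sigma}\subset K$. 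Combining the two inclusions produces the required symmetric coset space structure.

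The main obstacle is the Lie group structure of $I(M,F)$, which is not given in the excerpt and must be imported from the Finsler analogue of Myers--Steenrod; a secondary subtle point is the eigenspace decomposition step, where one must justify that $(ds_{p_{0}})_{p_{0}}=-I$ really transfers to $-I$ on the tangent model $\mathfrak{g}/\mathfrak{k}$ of $T_{p_{0}}M$, since it is this identification that forces the fixed-vector analysis to produce $\mathfrak{k}$ exactly. Once these are in hand, the rest of the argument is a bookkeeping exercise and yields that $(G,K)$ is a symmetric pair with $M\cong G/K$ and $F$ a $G$-invariant reversible Finsler metric, which is precisely the conclusion.
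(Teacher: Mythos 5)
Your construction follows essentially the same route as the paper: take $G=I(M,F)$ with its Lie group structure from the Finsler Myers--Steenrod theorem, let $K$ be the isotropy group at a base point, identify $M$ with $G/K$ via the orbit map, and define the involution by conjugation with the point symmetry $s_{p_{0}}$. In fact you supply more detail than the paper does on the sandwich $G^{0}_{\sigma}\subset K\subset G_{\sigma}$ (the $1$-jet rigidity argument for the right inclusion and the eigenspace argument for the left one), where the paper simply says ``it is easily seen.''

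There is one genuine omission relative to the stated conclusion: the theorem asserts a \emph{Riemannian} symmetric pair, not merely a symmetric pair, and for that one must know that $K$ (or at least $\mathrm{Ad}_{G}(K)$) is compact. This is where the paper invokes compactness of the isotropy subgroup $I_{p}(M,F)$, which follows because the linear isotropy representation embeds $K$ as a closed subgroup of the group of linear transformations of $T_{p_{0}}M$ preserving the Minkowski norm $F_{p_{0}}$, and the latter is compact. Your write-up never addresses this, so as it stands you have only produced a symmetric pair; adding the compactness of $K$ (and, if one wants the pair in Helgason's strict sense, passing to the quotient by the largest normal subgroup of $G$ contained in $K$ to make the action effective) closes the argument.
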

\begin {proof}
The group $I(M,F)$ of isometries of $(M,F)$  acts transitively on
$M$ ($(C)$ of theorem 1.5). We proved that $I(M,F)$ is a Lie
transformation group of $M$ and for any $p\in M$ (\cite{12} and
\cite{7}, page 78), the isotropic subgroup $I_{p}(M,F)$ is a
compact subgroup of $I(M,F)$ (\cite{4}). Since $M$ is connected
(\cite{7}, \cite{10}) and the subgroup $K$ of $G$ which $p$ fixed
is a compact subgroup of $G$. Furthermore, $M$ is diffeomorphic
to $G/K$ under the mapping $gH\to g.p$ , $g\in G$ (\cite{7}
Theorem
2.5, \cite{10}). \\
As in the Riemannian case in page 209 of \cite{7}, we define a
mapping $s$ of $G$ into $G$ by $s(g)=s_{p}gs_{p}$, where $s_{p}$
donote the (unique) involutive isometry of $(M,f)$ with $p$ as an
isolated fixed point. Then it is easily seen that $s$ is an
involutive automorphism of $G$ and the group $K$ lies between the
closed subgroup $K_{s}$ of fixed points of $s$ and the identity
component of $K_{s}$ (See definition of the symmetric coset space,
 \cite{11}). Furthermore, the group $K$ contains no normal
subgroup of $G$ other than $\{e\}$. That is, $(G,K)$ is symmetric
pair. $(G,K)$ is a Riemannian symmetric pair, because $K$ is
compact.
\end {proof}
The following useful will be results in the proof of our aim of
this paper.
\begin {proposition}
Let $(M,\bar{F})$ be a Finsler space, $p\in M$ and $H_{p}$ be the
holonomy group of $\bar{F}$ at $p$. If $F_{p}$ is a $H_{p}$
invariant Minkowski norm on $T_{p}(M)$, then $F_{p}$ can be
extended to a Finsler metric $F$ on $M$ by parallel translations
of $\bar{F}$ such that $F$ is affinely equivalent to $\bar{F}$
$($\cite{5}, proposition 4.2.2$)$
\end {proposition}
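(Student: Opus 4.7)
The plan is to define $F$ globally by parallel translating the given Minkowski norm $F_{p}$ from $p$ to every other point of $M$, using the linear parallel translation associated to $\bar{F}$. Concretely, for $q\in M$ and $v\in T_{q}M$, pick a piecewise smooth curve $\gamma$ from $p$ to $q$, let $P_{\gamma}:T_{p}M\to T_{q}M$ be the corresponding parallel transport of $\bar{F}$, and set $F(v):=F_{p}(P_{\gamma}^{-1}v)$.

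The first task is to check that this definition is independent of the curve chosen. If $\gamma_{1}$ and $\gamma_{2}$ are two piecewise smooth curves from $p$ to $q$, then $P_{\gamma_{1}}^{-1}\circ P_{\gamma_{2}}$ is the parallel translation around the loop $\gamma_{1}^{-1}\cdot\gamma_{2}$ and therefore lies in $H_{p}$ by definition of the holonomy group. Invariance of $F_{p}$ under $H_{p}$ gives $F_{p}(P_{\gamma_{1}}^{-1}v)=F_{p}(P_{\gamma_{2}}^{-1}v)$, so $F(v)$ depends only on $v$.

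Next I would verify the Finslerian axioms. On each tangent space $T_{q}M$, the fiber function $F|_{T_{q}M}=F_{p}\circ P_{\gamma}^{-1}$ is the linear pullback of a Minkowski norm by an isomorphism, hence itself a Minkowski norm: positivity, positive homogeneity, and strong convexity of the fundamental tensor all transfer. Smoothness of $F$ on $TM\setminus\{0\}$ follows from the smooth dependence of parallel transport on its endpoint, which one sees by choosing, near each point, the radial geodesics of a normal neighborhood and applying standard ODE regularity to the parallel equation.

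The concluding step is affine equivalence. By construction, the parallel translation of $\bar{F}$ sends the Minkowski norm at one tangent space to the Minkowski norm at another; hence $F$ is a parallel section of the appropriate norm bundle over $M$. This forces the canonical (Berwald) connection of $F$ to coincide with that of $\bar{F}$, which is precisely the meaning of affine equivalence. I expect this last identification to be the main obstacle: it requires invoking the uniqueness characterization of the Berwald connection from the spray together with parallel compatibility, and carefully transferring that property across the construction so that the two Finsler metrics genuinely share the same affine connection rather than merely the same parallel transport along the specific paths used.
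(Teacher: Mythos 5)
The paper itself offers no proof of this proposition: it is imported verbatim from the literature (the citation to Proposition 4.2.2 of \cite{5}; the construction goes back to Szab\'o and is the engine of the Deng--Hou arguments). So your proposal must be judged on its own, and its skeleton is the correct and standard one: define $F(v)=F_p(P_\gamma^{-1}v)$, check independence of $\gamma$ via $P_{\gamma_1}^{-1}\circ P_{\gamma_2}\in H_p$ together with the $H_p$-invariance of $F_p$, verify the Minkowski axioms fiberwise, get smoothness from the smooth dependence of parallel transport on the endpoint, and then establish affine equivalence. The well-definedness step is exactly right.

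Two points are genuine gaps rather than routine omissions. First, you treat $P_\gamma$ as a \emph{linear} isomorphism and conclude that $F_p\circ P_\gamma^{-1}$ is a Minkowski norm because strong convexity ``transfers'' under linear pullback. For a general Finsler metric $\bar F$ the parallel displacement along a curve (defined through the nonlinear connection, with the transported vector as its own reference vector) is only a positively homogeneous diffeomorphism of the punctured tangent spaces, not a linear map, and the pullback of a strongly convex norm by such a map need not have positive definite fundamental tensor. You must either restrict to the case in which parallel transport is linear (Berwald or Riemannian $\bar F$ --- which is all this paper actually needs, since the proposition is applied with $\bar F=g$ Riemannian) or supply a separate argument for the positive definiteness. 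Second, the final step is asserted, not proved: saying that $F$ is ``a parallel section of the norm bundle, hence the Berwald connections coincide'' is a restatement of the conclusion. Affine equivalence means the two metrics share the same geodesic spray, and this has to be extracted by a computation --- for instance by showing that the fundamental tensor of the constructed $F$ is parallel with respect to the Berwald connection of $\bar F$ and then deriving from the Euler--Lagrange equations of $F$ that its spray coefficients coincide with those of $\bar F$. As written, the part you defer as ``the main obstacle'' is precisely the analytic content of the proposition.
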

\begin {proposition}
A Finsler metric $F$ on a manifold $M$ is a Berwald metric if and
only if it is affinely equivalent to a Riemannian metric $g$. In
this case, $F$ and $g$ have the same holonomy group at any point
$p\in M$ $($see proposition 4.3.3 of \cite{5}$)$.
\end {proposition}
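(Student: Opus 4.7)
The plan is to prove the two directions separately, with the bridge between Finsler and Riemannian geometry provided by the preceding proposition on holonomy-invariant Minkowski norms.

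For the ``if'' direction, suppose $F$ is affinely equivalent to a Riemannian metric $g$. By definition this means the two metrics share the same geodesic spray and hence the same associated linear connection on $M$. The Levi-Civita connection of $g$ has coefficients $\Gamma^{i}_{jk}(x)$ independent of the fiber direction $y$, so the Chern (Berwald) connection coefficients of $F$ are also independent of $y$; this is exactly the definition of a Berwald metric. Conversely, assume $F$ is Berwald. Then its Chern connection descends to a torsion-free linear connection $\nabla$ on $M$, and parallel translation along any curve $\gamma$ from $p$ to $q$ is a linear map $T_{p}(M)\to T_{q}(M)$ that carries the Minkowski norm $F_{p}$ onto $F_{q}$. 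Consequently the holonomy group $H_{p}\subset GL(T_{p}(M))$ is contained in the linear isometry group of the Minkowski norm $(T_{p}(M),F_{p})$.

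The key auxiliary observation is that this linear isometry group is compact: the indicatrix of $F_{p}$ is a compact convex body containing the origin in its interior, so any linear automorphism preserving it is bounded in operator norm, and the group is closed in $GL(T_{p}(M))$. Therefore $H_{p}$ is a compact subgroup. Averaging an arbitrary inner product on $T_{p}(M)$ over $H_{p}$ with respect to its Haar measure produces an $H_{p}$-invariant inner product $g_{p}$ on $T_{p}(M)$. Applying the preceding proposition with $\bar{F}=F$ and the Riemannian norm $g_{p}$ in the role of the $H_{p}$-invariant Minkowski norm, I extend $g_{p}$ by parallel translation to a smooth Riemannian metric $g$ on $M$ that is affinely equivalent to $F$. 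The $H_{p}$-invariance guarantees that the extension is path-independent, and smoothness follows from the smoothness of parallel translation under $\nabla$.

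Finally, in both directions the Chern connection of $F$ coincides with the Levi-Civita connection of $g$ as a single linear connection on $M$. Since the holonomy group is defined purely in terms of parallel translation under this connection, $F$ and $g$ have identical parallel transport along every loop at $p$, and hence the same holonomy group $H_{p}$.

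The main obstacle I anticipate is the compactness of $H_{p}$, which is the hinge that lets me pass from the holonomy of a general Berwald structure to a genuine Riemannian structure; once compactness is in hand, Haar averaging and the previous proposition do the rest. A secondary technical point is verifying that the Chern connection of a Berwald metric really descends to a connection on $TM$ rather than merely on the slit bundle $TM\setminus 0$, but this is immediate from the $y$-independence of $\Gamma^{i}_{jk}$ characteristic of the Berwald case.
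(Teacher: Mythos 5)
The paper offers no proof of this proposition at all: it is quoted verbatim as Proposition 4.3.3 of a reference (in the literature this is essentially Szab\'o's theorem on Berwald metrics together with Ichijy\=o's observation that the tangent spaces of a Berwald space are mutually linearly isometric). Your argument is therefore not ``the same as the paper's'' or ``different from the paper's'' --- it is a genuine proof where the paper has only a citation --- and it is the standard one: the easy direction from the quadraticity of the spray of $g$, and the hard direction via compactness of the linear isometry group of the indicatrix, Haar averaging to get an $H_{p}$-invariant inner product, and the preceding extension proposition applied with the quadratic norm $\sqrt{g_{p}}$ in place of a general Minkowski norm. The final holonomy statement is immediate once the two metrics share one linear connection, as you say.

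The one step you state but do not justify, and which is the real content of the converse direction, is the claim that parallel translation of the Berwald connection carries $F_{p}$ isometrically onto $F_{q}$. This does not follow merely from the $y$-independence of the connection coefficients $\Gamma^{i}_{jk}$; it uses the almost metric-compatibility of the Chern connection (the horizontal covariant derivative of the fundamental tensor vanishes, $g_{ij|k}=0$), which in the Berwald case implies $\frac{d}{dt}F(\gamma(t),V(t))=0$ for any $\nabla$-parallel field $V$ along $\gamma$. Without this, the inclusion $H_{p}\subset \mathrm{Isom}(T_{p}M,F_{p})$ --- and hence the compactness on which everything else hinges --- is unsupported. You should also say a word about why the parallel-translated family $g_{q}$ varies smoothly in $q$ and is positive definite everywhere (both are immediate, but they are what makes $g$ a Riemannian metric rather than just a field of inner products), and note that $M$ must be connected for the pointwise construction at $p$ to propagate to all of $M$. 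With those additions your argument is complete and correct.
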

Now the main aim
\begin {theorem}
Let $(M,F)$ be a globally symmetric Finsler space. Then $(M,F)$
is a Berwald space. Furthermore, the connection of $F$ coincides
with the Levi-civita connection of a Riemannian metric $g$ such
that $(M,g)$ is a Riemannian globally symmetric space.
\end {theorem}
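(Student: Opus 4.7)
The plan is to combine Theorem~\ref{the2.1} with Propositions 2.2 and 2.3 by inserting an auxiliary $G$-invariant Riemannian metric as a bridge between $F$ and the Berwald-affine machinery.

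First, by Theorem~\ref{the2.1} I may write $M\cong G/K$ with $(G,K)$ a Riemannian symmetric pair and $F$ a $G$-invariant Finsler metric; in particular $K$ is compact. Averaging any Euclidean inner product on $T_{p}(M)$, $p=eK$, over the isotropy action of $K$ produces a $K$-invariant inner product, which extends uniquely to a $G$-invariant Riemannian metric $g$ on $M$. The very same involutive automorphism of $G$ that realises $(G,K)$ as a symmetric pair exhibits $(M,g)$ as a Riemannian globally symmetric space. This already furnishes the Riemannian symmetric space promised in the conclusion.

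Second, I would argue that $F$ is parallel with respect to the Levi-Civita connection of $g$. The central observation is that on a Riemannian symmetric space every $G$-invariant tensor field is automatically parallel, because parallel translation along any piecewise smooth curve can be realised as the differential of an element of $G$: the Cartan transvections $\exp(tX)$, for $X$ in the canonical complement of the Lie algebra of $K$, translate along geodesics and their differentials implement parallel transport there, while an arbitrary curve is handled by broken-geodesic approximation within normal neighbourhoods. Since $F$ is $G$-invariant, parallel translation of $F_{p}:=F|_{T_{p}(M)}$ along any curve from $p$ to $q$ therefore yields $F_{q}$.

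Third, this makes $F_{p}$ invariant under the holonomy group $H_{p}$ of $g$ at $p$, so Proposition 2.2 (applied with $\bar F=g$ and the Minkowski norm $F_{p}$) yields a Finsler metric $\tilde F$ on $M$, obtained by parallel translating $F_{p}$, which is affinely equivalent to $g$; by the second step, $\tilde F$ coincides with $F$. Proposition 2.3 then identifies $F$ as a Berwald metric and forces its connection to coincide with the Levi-Civita connection of $g$, as required.

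The main obstacle is the second step: rigorously verifying that $G$-invariance of $F$ implies parallelism under $g$. This relies on the transvection description of parallel transport on a Riemannian symmetric space (classical, but not stated in the excerpt), together with a compactness argument to cover an arbitrary smooth curve by finitely many normal neighbourhoods. Once this is in hand, the remaining deductions are essentially mechanical applications of the two stated propositions.
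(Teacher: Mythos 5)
Your argument is correct and reaches the conclusion through the same scaffolding the paper uses (Theorem~\ref{the2.1}, an auxiliary $G$-invariant Riemannian metric $g$, Propositions 2.2 and 2.3, and the transvections $\exp tX$ to compare $F$ with the parallel extension of $F_{p}$), but the pivotal step --- holonomy invariance of the Minkowski norm $F_{p}$ --- is handled by a genuinely different route. The paper reduces to the simply connected case, invokes the de~Rham splitting $G/K=E\times G_{1}/K_{1}\times\cdots\times G_{n}/K_{n}$, and computes each factor's holonomy algebra by Ambrose--Singer together with the semisimplicity of $G_{i}$, arriving at the exact equality $H_{0}=K_{1}^{*}\times\cdots\times K_{n}^{*}$. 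You prove only the containment $H_{p}\subseteq K^{*}$, by observing that parallel transport of $g$ along (broken) geodesics is implemented by differentials of transvections, hence parallel transport around a loop at $p$ is $dk|_{T_{p}M}$ for some $k$ in the isotropy group; since $F$ is $G$-invariant this containment is all that Proposition 2.2 requires, so you avoid the decomposition, the semisimplicity argument, and the reduction to the simply connected case entirely. The price is the limit argument needed to pass from broken geodesics to arbitrary piecewise smooth loops, which you rightly flag: it is the classical fact that the holonomy group of a Riemannian symmetric space lies in the linear isotropy group (compactness of $K$ lets you take the limit of the approximating transvection products inside $G$), and it should be cited or written out rather than sketched. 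Your treatment of the final clause is also cleaner than the paper's: instead of appealing to Szab\'o's theorem to produce some Riemannian metric with the connection of $F$ and then arguing that this connection is affine symmetric, you read the conclusion directly off Proposition 2.3 applied to the globally symmetric metric $g$ constructed at the outset, which already has the same connection as $F$.
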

\begin {proof}
We first prove $F$ is Beraldian. By Theorem 2.1, there exists a
Riemannian symmetric pair $(G,K)$ such that $M$ is diffeomorphic
to $G/K$ and $F$ is invariant under $G$. Fix a $G$- invariant
Riemannian metric $g$ on $G/K$. Without losing generality, we can
assume that $(G,K)$ is effective (see \cite{11} page 213). Since
being a Berwald space is a local property, we can assume further
that $G/K$ is simple connected. Then we have a decomposition
(page 244 of \cite{11}):
\begin{eqnarray*}
G/K=E\times G_{1}/K_{1}\times G_{2}/K_{2}\times ...\times
G_{n}/K_{n},
\end{eqnarray*}
where $E$ is a Euclidean space, $G_{i}/K_{i}$ are simply connected
irreducible Riemannian globally symmetric spaces, $i=1,2,...,n$.
Now we determine the holonomy groups of $g$ at the origin of
$G/K$. According to the de Rham decomposition theorem (\cite{2}),
it is equal to the product of the holonomy groups of $E$ and
$G_{i}/K_{i}$ at the origin. Now $E$ has trivial holonomy group.
For $G_{i}/K_{i}$, by the holonomy theorem of Ambrose and Singer
(\cite{12}, page 231, it shows, for any connection, how the
curvature form generats the holonomy group), we know that the lie
algebra $\eta_{i}$ of the holonomy group $H_{i}$ is spanned by
the linear mappings of the form
$\{\widetilde{\tau}^{-1}R_{0}(X,Y)\widetilde{\tau}\}$, where
$\tau$ denotes any piecewise smooth curve starting from $o$,
$\widetilde{\tau}$ denotes parallel displacements (with respect to
the restricted Riemannian metric) a long $\widetilde{\tau}$,
$\widetilde{\tau}^{-1}$ is the inverse of $\widetilde{\tau}$,
$R_{0}$ is the curvature tensor of $G_{i}/K_{i}$ of the restricted
Riemannian metric and $X,Y\in T_{0}(G_{i}/K_{i})$. Since
$G_{i}/K_{i}$ is a globally symmetric space, the curvature tensor
is invariant under parallel displacements (page 201 of
\cite{10},\cite{11}). So
\begin{eqnarray*}
\eta_{i}=span\{R_{0}(X,Y)|X,Y\in T_{0}(G_{i}/K_{i})\},
\end{eqnarray*}
(see page 243 of \cite{7}, \cite{11}).\\
On the other hand, Since $G_{i}$ is a semisimple group. We know
that the Lie algebra of $K^{*}_{i}=Ad(K_{i})\simeq K$ is also
equal to the span of $R_{0}(X,Y)$ (\cite{11}). The groups $H_{i}$,
$K^{*}_{i}$ are connected (because $G_{i}/K_{i}$ is simply
connected) (\cite{10} and \cite{11}). Hence we have
$H_{i}=K^{*}_{i}$. Consequently the holonomy group $H_{0}$ of
$G/K$ at the origin is
\begin{eqnarray*}
K^{*}_{1}\times K^{*}_{2}\times...\times K^{*}_{n}
\end{eqnarray*}
Now $F$ defines a Minkowski norm $F_{0}$ on $T_{0}(G/K)$ which is
invariant by $H_{0}$ (\cite{2}). By proposition 2.2, we can
construct a Finsler metric $\bar{F}$ on $G/K$ by parallel
translations of $g$. By proposition 2.3, $\bar{F}$ is Berwaldian.
Now for any point $p_{0}=aK\in G/K$, there exists a geodesic of
the Riemannian manifold $(G/K , g)$, say $\gamma(t)$ such that
$\gamma(0)=0, \gamma(1)=p_{0}$. Suppose the initial vector of
$\gamma$ is $X_{0}$ and take $X\in p$ such that $d\pi(X)=X_{0}$.
Then it is known that $\gamma(t)=\exp tX.p_{0}$ and $d\tau(\exp
tX)$ is the parallel translate of $(G/K, g)$ along $\gamma$
(\cite{11} and \cite{7}, page 208). Since $F$ is $G$- invariant,
it is invariant under this parallel translate. This means that $F$
and $\bar{F}$ concede at $T_{p_{0}}(G/K)$. Consequently they
concide everywhere. Thus $F$ is
a Berwald metric. \\
For the next assertion, we use a result of Szabo' (\cite{2}, page
278) which asserts that for any Berwald metric on $M$ there
exists a Riemannian metric with the same connection. We have
proved that $(M,F)$ is a Berwald space. Therefore there exists a
Riemannian metric $g_{1}$ on $M$ with the same connection as $F$.
In \cite{11}, we showed that the connection of a globally
symmetric Berwald space is affine symmetric. So $(M,F)$ is a
Riemannian globally symmetric space (\cite{7}, \cite{11}).
\end {proof}
 From the proof of theorem
2.4, we have the following corollary.
\begin {corollary}
Let $(G/K, F)$ be a globally symmetric Finsler space and
$g=\ell+p$ be the corresponding decomposition of the Lie
algebras. Let $\pi$ be the natural mapping of $G$ onto $G/K$.
Then $(d\pi)_{e}$ maps $p$ isomorphically onto the tangent space
of $G/K$ at $p_{0}=eK$. If $X\in p$, then the geodesic emanating
from $p_{0}$ with initial tangent vector $(d\pi)_{e}X$ is given by
\begin{eqnarray*}
\gamma_{d\pi.X}(t)=\exp tX.p_{0}.
\end{eqnarray*}
Furthermore, if $y\in T_{p_{0}}(G/K)$, then $(d\exp
tX)_{p_{0}}(Y)$ is the parallel of $Y$ along the geodesic (see
\cite{11}, \cite{7} proof of theorem 3.3).
\end {corollary}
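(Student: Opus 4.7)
The plan is to harvest the corollary directly from the structural results assembled in Theorem~2.4. Theorem~2.4 shows that $(M,F)$ is Berwaldian and that its (Berwald) connection coincides with the Levi-Civita connection of a $G$-invariant Riemannian metric $g$ making $(G/K,g)$ a Riemannian globally symmetric space. Once this identification is in place, the corollary is entirely about Riemannian symmetric spaces, and I would simply import the classical statements (Helgason, Ch.~IV, \cite{7}) and transport them to the Finsler setting via the shared connection.

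First I would treat the Lie-algebraic claim. The involutive automorphism $s$ of $G$ constructed in the proof of Theorem~2.1 induces an involution on $g$, and $g=\ell+p$ is the $(+1,-1)$-eigenspace decomposition, with $\ell$ the Lie algebra of $K$. Because $\ell=\ker(d\pi)_e$ and $\dim p=\dim(G/K)$, the restriction $(d\pi)_e|_{p}$ is a linear isomorphism onto $T_{p_0}(G/K)$; this is a routine calculation independent of $F$.

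Next, for the geodesic formula, I would apply the classical Riemannian result: for $X\in p$, the curve $t\mapsto \exp(tX)\cdot p_0$ is a geodesic of $(G/K,g)$ with initial velocity $(d\pi)_eX$, and the differential $(d\tau(\exp tX))_{p_0}$ is parallel translation along this curve with respect to the Levi-Civita connection of $g$ (cf.\ \cite{7}, p.~208, and \cite{11}). By Theorem~2.4 the Berwald connection of $F$ equals the Levi-Civita connection of $g$, hence both connections share the same geodesic equation $\nabla_{\dot\gamma}\dot\gamma=0$ and the same notion of parallel translation. Therefore $\exp(tX)\cdot p_0$ is also an affinely parametrized geodesic of $(M,F)$ with initial tangent $(d\pi)_eX$, and $(d\exp tX)_{p_0}$ realizes the $F$-parallel translation of $Y\in T_{p_0}(G/K)$ along it.

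The only delicate point I anticipate is making sure the transfer from $g$ to $F$ preserves not merely the unparametrized geodesic trajectories but also the affine parametrization and the parallel-transport operator. This is, however, immediate from the fact that parallel transport and geodesic parametrization are determined solely by the linear connection, and Theorem~2.4 established that this connection is common to $F$ and $g$; so no new computation is required beyond citing the Riemannian statement.
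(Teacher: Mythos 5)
Your proposal is correct and follows essentially the same route as the paper: the authors explicitly derive this corollary ``from the proof of Theorem 2.4,'' where the classical facts about $(d\pi)_e|_{\mathfrak p}$, the geodesics $t\mapsto\exp(tX)\cdot p_0$, and parallel translation via $d\tau(\exp tX)$ for the Riemannian symmetric metric $g$ are invoked (citing Helgason and Kobayashi--Nomizu) and then transferred to $F$ through the coincidence of the Berwald connection of $F$ with the Levi-Civita connection of $g$. Your explicit remark that the shared linear connection carries over both the affine parametrization and the parallel-transport operator is exactly the point the paper leaves implicit.
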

\begin{example}
Let $G_{1}\big/ K_{1}$, $G_{2}\big/ K_{2}$ be two symmetric coset
spaces with $K_{1},K_{2}$ compact (in this coset, they are
Riemannian symmetric spaces) and $g_{1},g_{2}$ be invariant
Riemannian metric on $G_{1}\big/ K_{1}$, $G_{2}\big/ K_{2}$,
respectively. Let $M=G_{1}\big/ K_{1}\times G_{2}\big/ K_{2}$ and
$O_{1}, O_{2}$ be the origin of $G_{1}\big/ K_{1}, G_{2}\big/
K_{2}$, respectively and denote $O=(O_{1}, O_{2})$ (the origin of
$M$). Now for $y=y_{1}+y_{2}\in T_{O}(M)=T_{O_{1}}(G_{1}\big/
K_{1})+T_{O_{2}}(G_{2}\big/ K_{2})$, we define
\begin{eqnarray*}
F(y)=\sqrt{g_{1}(y_{1},y_{2})+g_{2}(y_{1},y_{2})+\sqrt[s]{g_{1}(y_{1},y_{2})^{s}+g_{2}(y_{1},y_{2})^{s}}},
\end{eqnarray*}
where $s$ is any integer $\geq2$. Then $F(y)$ is a Minkowski norm
on $T_{O}(M)$ which is invariant under $K_{1}\times K_{2}$
(\cite{4}). Hence it defines an $G$- invariant Finsler metric on
$M$ (\cite{6}, Corollary 1.2, of page 8246). By theorem 2.1,
(M,F) is a globally symmetric Finsler space. By theorem 2.4 and
(\cite{2}, page 266) $F$ is non-Riemannian.
\end{example}

\end{document}